\newtheorem{theorem}{Theorem}
\newtheorem{lemma}[theorem]{Lemma}
\newtheorem{prop}[theorem]{Proposition}
\newcommand{\N}{\mathcal{N}}
\newcommand\ex{\ensuremath{\mathrm{ex}}}
\begin{document}

\title{On supersaturation and stability for generalized Tur\'an problems}

\author{Anastasia Halfpap\thanks{Department of Mathematical Sciences,
University of Montana, Missoula, Montana 59812, USA.}
\and
Cory Palmer\footnotemark[1]
}

\maketitle

\begin{abstract}
Fix graphs $F$ and $H$. Let $\ex(n,H,F)$ denote the maximum number of copies of a graph $H$ in an $n$-vertex $F$-free graph. In this note we will give a new general supersaturation result for $\ex(n,H,F)$ in the case when $\chi(H) < \chi(F)$ as well as a new proof of a stability theorem for $\ex(n,K_r,F)$.
\end{abstract}

\section{Introduction}

Let $F$ be a graph. A graph $G$ is $F$-\emph{free} if it contains no copy of $F$ as a subgraph. Denote the maximum number of copies of a graph $H$ in an $n$-vertex $F$-free graph by
\[
\ex(n,H,F).
\]

After several sporadic results (a famous example is $\ex(n,C_5,K_3)$; see e.g., \cite{Gy,HaETAL,G2012}), the systematic study of the function $\ex(n,H,F)$ was initiated by Alon and Shikhelman \cite{alonsik}. An overview of results on $\ex(n,H,F)$ can be found in \cite{alonsik} and \cite{GePa}.

We denote the number of copies of a graph $H$ in a graph $G$ by $\N(H,G)$.
Recall that the \emph{Tur\'an graph} $T_{k-1}(n)$ is the $n$-vertex complete $(k-1)$-partite graph with classes of size as close as possible (i.e., classes differ by at most one vertex).
Zykov's ``symmetrization'' proof \cite{zykov} of Tur\'an's theorem gives the following generalization.

\begin{theorem}[Zykov, 1949]\label{zykov-theorem}
The Tur\'an graph $T_{k-1}(n)$ is the unique $n$-vertex $K_k$-free graph with the maximum number of copies of $K_r$. Thus,
\[
\ex(n,K_r,K_k) = \N(K_r, T_{k-1}(n)) \leq \binom{k-1}{r} \left \lceil \frac{n}{k-1}\right \rceil ^r.
\]
\end{theorem}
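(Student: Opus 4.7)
The plan is to prove the theorem by Zykov's symmetrization argument, extended from edge-counts to $K_r$-counts. For a pair of non-adjacent vertices $u, v$ in a graph $G$, define the symmetrization $G_{v \to u}$ as the graph obtained from $G$ by deleting $v$ and inserting a new vertex $v'$ whose neighborhood equals $N_G(u)$; in particular, $u$ and $v'$ form a non-adjacent twin pair in the new graph. First I would verify that this operation preserves $K_k$-freeness: any $K_k$ in $G_{v \to u}$ using $v'$ cannot also use $u$ (they are non-adjacent), so replacing $v'$ with $u$ in the clique produces a $K_k$ in $G$, a contradiction.

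Next, I would compare the $K_r$-counts. Let $a$ and $b$ denote the numbers of copies of $K_r$ in $G$ through $u$ and through $v$, respectively; since $uv \notin E(G)$, no $K_r$ contains both. Then $\N(K_r, G_{v \to u}) - \N(K_r, G) = a - b$: copies of $K_r$ through $v'$ in $G_{v \to u}$ biject with copies through $u$ in $G$, while copies of $K_r$ avoiding both $u$ and $v'$ in the new graph coincide with those avoiding both $u$ and $v$ in $G$. Hence symmetrizing toward the vertex in more $K_r$'s never decreases the count. Starting from any extremal $K_k$-free $G$ and iterating the operation, we eventually reach a graph in which non-adjacency is an equivalence relation, i.e., a complete multipartite graph, necessarily with at most $k-1$ parts. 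Among $n$-vertex complete multipartite graphs with at most $k-1$ parts, a direct exchange argument---shifting a vertex from a larger part to a smaller one strictly increases the product count of $K_r$'s---shows that $T_{k-1}(n)$ is the unique maximizer.

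The claimed upper bound then follows since each $K_r$ in $T_{k-1}(n)$ is determined by a choice of $r$ of the $k-1$ parts (in $\binom{k-1}{r}$ ways) together with a choice of one vertex per selected part (at most $\lceil n/(k-1) \rceil$ options per part). I expect the main obstacle to be the uniqueness statement: the symmetrization argument directly yields only that \emph{some} extremal graph equals $T_{k-1}(n)$. Upgrading to ``every extremal graph equals $T_{k-1}(n)$'' requires showing that extremality forces every non-adjacent pair to have identical neighborhoods, which in turn can be obtained by ruling out an induced ``non-edge path'' $u-v-w$ (with $uv, vw \notin E$ and $uw \in E$) in an extremal graph via a careful three-vertex $K_r$-count comparison.
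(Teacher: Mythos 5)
The paper does not prove this theorem at all --- it is quoted as a classical result of Zykov, with the remark that his symmetrization proof of Tur\'an's theorem yields it. Your proposal is precisely that symmetrization argument, so you are reconstructing the proof the paper merely cites. Your two computations are correct: the operation $G_{v\to u}$ preserves $K_k$-freeness, and $\N(K_r,G_{v\to u})-\N(K_r,G)=a-b$ because no copy of $K_r$ meets both $u$ and $v$ and the graph induced on $V(G)\setminus\{u,v\}$ is untouched. The final counting bound and the balancing of part sizes (via elementary symmetric polynomials) are also fine, and this is all the paper actually uses downstream, since only the inequality $\ex(n,K_r,K_k)\le\binom{k-1}{r}\lceil n/(k-1)\rceil^{r}$ enters Lemma 9.

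Two points need more care than your sketch provides. First, termination: repeatedly symmetrizing toward the vertex in more copies of $K_r$ only gives a non-decreasing count, so a naive iteration can cycle; the standard fix is to symmetrize in rounds (pick a vertex of maximum $K_r$-count, replace \emph{all} of its non-neighbors by copies of it, set that twin-class aside, and recurse), which visibly terminates. Second, and more seriously, your plan for uniqueness has a genuine hole. In the three-vertex comparison with $uv,vw\notin E$, $uw\in E$, and $K_r$-counts $a,b,c$ through $u,v,w$, extremality forces $b\ge a$ and $b\ge c$, and replacing both $u$ and $w$ by copies of $v$ changes the count by $2b-a-c+t$, where $t$ is the number of copies of $K_r$ containing the edge $uw$. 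For $r=2$ one has $t=1$ and the classical contradiction; but for $r\ge 3$ the term $t$ can be zero (an edge of an extremal graph lying in no $K_r$), in which case the comparison yields no strict increase and hence no contradiction. So the ``careful three-vertex count'' you gesture at does not by itself rule out non-edge paths, and extra arguments (handling edges and vertices lying in no $K_r$) are required to get the stated uniqueness. Since the present paper never uses uniqueness, this gap is harmless for its purposes, but it is a real gap in a proof of the theorem as stated.
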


Zykov's theorem has been rediscovered and reproved several times (see e.g., \cite{alonsik, Bo, Er1}).

The Tur\'an graph $T_{k-1}$ contains no $k$-chromatic graph $F$, so we always have the trivial lower bound
\[
\N(H,T_{k-1}(n)) \leq \ex(n,H,F).
\]
Erd\H os-Stone-type generalizations of Theorem~\ref{zykov-theorem} were given in \cite{alonsik} and \cite{GePa}. We state them as a single theorem below.

\begin{theorem}[Alon-Shikhelman, 2016; Gerbner-Palmer, 2019]\label{ESS-gen}
Let $H$ be a graph and $F$ be a graph with chromatic number $k$, then
\[\ex(n,H,F) \leq \ex(n,H,K_k) + o(n^{|V(H)|}).\]
Thus, when $H=K_r$,
\[
 \ex(n, K_r, F) = \binom{k-1}{r}\left(\frac{n}{k-1}\right)^r + o(n^r).
\]
\end{theorem}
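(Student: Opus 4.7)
My approach is to reduce to a statement about $K_k$-freeness and then apply Szemer\'edi's regularity lemma together with an embedding argument. Since $\chi(F)=k$, the graph $F$ is a subgraph of the balanced complete $k$-partite graph with $t=|V(F)|$ vertices in each part. Hence every $F$-free graph is also free of this blow-up of $K_k$, and it suffices to prove that any $n$-vertex graph $G$ containing no such blow-up satisfies $\N(H,G)\le \ex(n,H,K_k)+o(n^{|V(H)|})$.

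Fix $\epsilon>0$ and apply the regularity lemma to $G$, obtaining an equitable partition $V_1,\dots,V_M$ with all but an $\epsilon$-fraction of the pairs $\epsilon$-regular. Let $R$ be the reduced graph on $\{1,\dots,M\}$ in which $ij$ is an edge exactly when $(V_i,V_j)$ is $\epsilon$-regular with density at least $d$. The standard counting lemma shows that a $K_k$ in $R$ would yield a copy of the $t$-blow-up of $K_k$ in $G$ for $n$ sufficiently large; since $G$ avoids this blow-up, $R$ must be $K_k$-free. Now let $G'\subseteq G$ be obtained by deleting all edges inside some $V_i$, in an irregular pair, or in a regular pair of density less than $d$. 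Then $G'$ is $K_k$-free (being a subgraph of a blow-up of $R$), and a standard count yields $|E(G)\setminus E(G')|\le (\epsilon+d+1/M)n^2$.

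Each copy of $H$ in $G$ either uses only edges of $G'$, in which case the total number is bounded by $\ex(n,H,K_k)$, or uses at least one deleted edge, in which case we enumerate by first choosing the deleted edge (at most $(\epsilon+d+1/M)n^2$ choices) and then extending arbitrarily to the remaining $|V(H)|-2$ vertices, yielding $O((\epsilon+d+1/M)n^{|V(H)|})$ such copies. Making $\epsilon$, $d$, and $1/M$ sufficiently small gives the desired error term $o(n^{|V(H)|})$.

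For the $H=K_r$ case, Theorem~\ref{zykov-theorem} supplies the upper bound $\ex(n,K_r,K_k)\le \binom{k-1}{r}\lceil n/(k-1)\rceil^r$, and since $T_{k-1}(n)$ is $F$-free, it witnesses a matching lower bound of $\binom{k-1}{r}(n/(k-1))^r+o(n^r)$ for $\ex(n,K_r,F)$, giving the claimed asymptotic. The main obstacle I expect is the embedding step: one must ensure that the blow-up parameter $t$, which depends only on $F$, can be accommodated by a choice of regularity parameters made before letting $n\to\infty$, so that the counting lemma actually produces the forbidden blow-up of $K_k$ inside $G$.
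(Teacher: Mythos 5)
Your proof is correct, but it follows a genuinely different route from the paper. The paper deduces this theorem from its own supersaturation result (Theorem~\ref{supersaturation}, proved by an elementary averaging argument in the style of Katona--Nemetz--Simonovits and Erd\H{o}s--Simonovits) combined with Proposition~\ref{degen-prop}: since $F \subseteq K_k[t]$, one has $\ex(n,H,F) \leq \ex(n,H,K_k[t])$, and supersaturation together with the blow-up characterization shows that forbidding $K_k[t]$ costs only $o(n^{|V(H)|})$ more copies of $H$ than forbidding $K_k$; no regularity lemma appears anywhere. You instead run the classical Erd\H{o}s--Stone-style argument: reduce to $K_k[t]$-freeness, apply Szemer\'edi's regularity lemma, show the reduced graph is $K_k$-free via the counting/embedding lemma, clean the graph to a $K_k$-free subgraph $G'$, and charge every copy of $H$ meeting a deleted edge to one of the $O((\epsilon+d+1/M)n^2)$ deleted edges. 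This is essentially the original Alon--Shikhelman approach. Your version is self-contained (given the regularity machinery), handles the quantifier issue you flag correctly --- $t$ depends only on $F$, so $\epsilon$ and the lower bound on $M$ can be fixed in terms of $d$, $t$, $k$ before letting $n\to\infty$ --- and does not need the hypothesis $\chi(H)<\chi(F)$ that Theorem~\ref{supersaturation} carries; the cost is invoking the regularity lemma, with its tower-type dependence of $M$ on $\epsilon$, where the paper's route stays entirely elementary (which is the point of the note). The $H=K_r$ case is handled identically in both arguments, via Theorem~\ref{zykov-theorem} for the upper bound and the $F$-free Tur\'an graph $T_{k-1}(n)$ for the matching lower bound.
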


Note that the first part of Theorem~\ref{ESS-gen} only gives a useful upper-bound if $\ex(n,H,K_k) = \Omega(n^{|H|})$, which happens if and only if $K_k$ is not a subgraph of $H$. 

An important description of the \emph{degenerate case} is given by Alon and Shikhelman \cite{alonsik}. The \emph{blow-up} $G[t]$ of a graph $G$ is the graph resulting from replacing each vertex of $G$ with $t$ copies of itself.

\begin{prop}[Alon-Shikhelman, 2016]\label{degen-prop}
The function
    $\ex(n,H,F) = o(n^{|V(H)|})$ if and only if $F$ is a subgraph of a blow-up of $H$. Otherwise, $\ex(n,H,F) = \Omega(n^{|V(H)|})$.
\end{prop}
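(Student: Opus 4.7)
I would prove the two directions separately.

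For the ``only if'' direction (no blow-up of $H$ contains $F$ implies $\ex(n,H,F) = \Omega(n^{|V(H)|})$), set $h = |V(H)|$ and consider $G = H[\lfloor n/h\rfloor]$, padded with isolated vertices to reach exactly $n$ vertices. By the hypothesis on $F$, this graph is $F$-free. Each choice of one vertex from each of the $h$ blow-up classes yields an ordered embedding of $H$, giving at least $\lfloor n/h\rfloor^h$ labelled copies of $H$, hence $\Omega(n^h)$ unordered copies after dividing by $|\mathrm{Aut}(H)|$.

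For the ``if'' direction, assume $F \subseteq H[t_0]$ for some $t_0 \geq 1$. I reduce to the following auxiliary claim: for every graph $H$ on $h$ vertices and every integer $t \geq 1$, there is a constant $C = C(H,t)$ such that every $n$-vertex graph $G$ with $\N(H,G) \geq C\, n^{h - 1/t^{h-1}}$ contains $H[t]$ as a subgraph. Applied with $t = t_0$, the claim tells us that any $F$-free graph $G$ satisfies $\N(H,G) < C\, n^{h - 1/t_0^{h-1}} = o(n^h)$, yielding $\ex(n,H,F) = o(n^{|V(H)|})$.

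To establish the auxiliary claim, fix an ordering $v_1, \ldots, v_h$ of $V(H)$ and build the $h$-partite $h$-uniform hypergraph $\mathcal{H}$ on vertex classes $V_1, \ldots, V_h$, each a copy of $V(G)$, whose edges are the injective homomorphisms $H \to G$ with $v_i \mapsto u_i \in V_i$. Each copy of $H$ in $G$ contributes exactly $|\mathrm{Aut}(H)|$ such homomorphisms, so $|E(\mathcal{H})| = |\mathrm{Aut}(H)| \cdot \N(H,G)$. The hypergraph Kővári-Sós-Turán theorem of Erdős then guarantees a complete $h$-partite sub-hypergraph $K^{(h)}_{t, \ldots, t} \subseteq \mathcal{H}$ once the edge count exceeds $C'\, n^{h - 1/t^{h-1}}$ for an appropriate constant $C'$. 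This yields sets $U_1, \ldots, U_h \subseteq V(G)$ of size $t$ such that every transversal in $U_1 \times \cdots \times U_h$ is an injective homomorphism of $H$ into $G$. The $U_i$ are automatically pairwise disjoint, since a common vertex would force a non-injective transversal to be an edge of $\mathcal{H}$. Consequently, $G[U_1 \cup \cdots \cup U_h]$ contains the blow-up $H[t]$, completing the claim.

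The main technical input is the hypergraph Zarankiewicz bound; once that is invoked, the rest is the bookkeeping identifying edges of $\mathcal{H}$ with labelled copies of $H$ and the observation about forced disjointness of the $U_i$.
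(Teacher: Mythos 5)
Your argument is correct. The paper does not prove this proposition --- it is quoted from Alon and Shikhelman --- and your two-part argument (taking $G = H[\lfloor n/h\rfloor]$ for the $\Omega(n^h)$ lower bound when no blow-up of $H$ contains $F$, and, for the converse, using Erd\H{o}s's hypergraph K\H{o}v\'ari--S\'os--Tur\'an theorem on the $h$-partite hypergraph of injective homomorphisms to show that $C n^{h-1/t^{h-1}}$ copies of $H$ force a copy of $H[t]$, hence of $F$) is essentially the original Alon--Shikhelman proof, including the correct handling of the disjointness of the sets $U_1,\dots,U_h$.
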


The purpose of this paper is to establish a general supersaturation result and give a new proof of a stability theorem for the function $\ex(n,H,F)$. Both of the main results are proved using modifications of standard proofs of stability and supersaturation for the ordinary Tur\'an function $\ex(n,F)$. Previous supersaturation results were given by  Cutler, Nir and Radcliffe \cite{cnr} who (among other things) proved a structural supersaturation for Theorem~\ref{zykov-theorem} (Zykov's Theorem) as well as a supersaturation result for the case when $H$ is a clique and $F$ is a star. Our first main result is a supersaturation theorem for $\ex(n,H,F)$ in the case when $\chi(H) < \chi(F)$.

\begin{theorem}[]\label{supersaturation}
Fix graphs $F, H$ on $f$ and $h$ vertices, respectively such that
$\chi(H) < \chi(F)$. 
 For $c > 0$, there exists $c_F > 0$ such that if $G$ is an $n$-vertex graph with 
\[
\N(H,G) > \ex(n,H,F) + c n^h,
\]
then $\N(F,G) \geq c_F n^f$.
\end{theorem}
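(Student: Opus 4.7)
The plan is to adapt the classical sampling argument used to prove the Erd\H{o}s--Simonovits supersaturation theorem for $\ex(n,F)$. Choose a large constant $s$ (to be specified) and consider a uniformly random $s$-subset $S \subseteq V(G)$. The expected value of $\N(H,G[S])$ over $S$ is $\N(H,G)\binom{s}{h}/\binom{n}{h}$. The goal is to show that this expectation exceeds $\ex(s,H,F)$ by a positive multiple of $\binom{s}{h}$; a Markov-type argument will then force a positive fraction of $s$-subsets $S$ to satisfy $\N(H,G[S]) > \ex(s,H,F)$, so each such $G[S]$ contains a copy of $F$. Double-counting pairs $(S,F')$ with $F'\cong F$ in $G[S]$ will then yield $\N(F,G)=\Omega(n^f)$.

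To compare the expectation with $\ex(s,H,F)$, set $\mu_n := \ex(n,H,F)/\binom{n}{h}$. Applying the sampling argument to an extremal $F$-free graph on $n$ vertices shows that some $s$-subset induces an $F$-free graph with at least $\mu_n \binom{s}{h}$ copies of $H$, giving $\mu_s \geq \mu_n$ whenever $s \leq n$. Thus $(\mu_n)$ is non-increasing and converges to some $\mu^* \geq 0$. Given $c>0$, pick $s$ first, large enough that $\mu_s-\mu^*$ is sufficiently small (depending on $c,h$), and then pick $n$ large enough (depending on $s,c$) that $\mu_n-\mu^*$ is equally small and the identity $n^h\binom{s}{h}/\binom{n}{h} = h!\binom{s}{h}(1+o(1))$ holds tightly. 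Using the hypothesis $\N(H,G) > \mu_n\binom{n}{h} + cn^h$ then yields
\[
\N(H,G)\binom{s}{h}/\binom{n}{h} \;\geq\; \ex(s,H,F) + c'\binom{s}{h}
\]
for some positive constant $c'=c'(c,h)$.

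Since $\N(H,G[S])$ is bounded by a constant (depending only on $H$) times $\binom{s}{h}$, a Markov-type calculation produces a constant $p=p(c,s,H)>0$ such that $\N(H,G[S]) > \ex(s,H,F)$ for at least $p\binom{n}{s}$ of the $s$-subsets $S$. Since each copy of $F$ in $G$ is contained in exactly $\binom{n-f}{s-f}$ $s$-subsets,
\[
\N(F,G) \;\geq\; p\binom{n}{s}\Big/\binom{n-f}{s-f} \;=\; p\binom{n}{f}\Big/\binom{s}{f} \;\geq\; c_F n^f,
\]
yielding the desired $c_F=c_F(c,H,F)>0$. The main technical obstacle is the order of quantifiers: $s$ must be chosen first (large enough to make $\mu_s-\mu^*$ small) and then held fixed as $n\to\infty$, so that $\mu_n-\mu^*$ becomes small while $\binom{n}{f}/\binom{s}{f}$ remains $\Omega(n^f)$.
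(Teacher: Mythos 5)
Your proposal is correct and takes essentially the same approach as the paper: both average $\N(H,\cdot)$ over all subsets of a fixed size (your $s$, the paper's $m$) chosen so that $\ex(s,H,F)/\binom{s}{h}$ is close to its limit $\pi(H,F)$, conclude via a Markov/counting step that a positive fraction of these subsets span more than $\ex(s,H,F)$ copies of $H$ and hence each contains a copy of $F$, and finish by double-counting copies of $F$. The only cosmetic difference is that you obtain the monotonicity of $\ex(n,H,F)/\binom{n}{h}$ by sampling from an extremal graph rather than by the Katona--Nemetz--Simonovits double count used in the paper's Lemma~\ref{mono-dec}; the two are equivalent.
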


Through a standard argument we can reprove Theorem~\ref{ESS-gen} via Theorem~\ref{supersaturation}. This and the proof of Theorem~\ref{supersaturation} will be given in Section~\ref{super-section}.

A general stability theorem for $\ex(n,K_r,F)$ was given by Ma and Qiu \cite{mq}. A second proof is due to Liu \cite{Liu}. We give give a new short proof of this theorem by making use of the standard stability theorem. 
This will be discussed in Section~\ref{stability-section}.

\begin{theorem}[Ma-Qiu, 2019]\label{stability}


Fix integers $r < k$ and let $F$ be graph with $\chi(F)=k$.
 If $G$ is an $n$-vertex $F$-free graph with 
 \[
 \N(K_r,G) > \ex(n,K_r,F) - o(n^r),
 \]
 then $G$ can be obtained from $T_{k-1}(n)$ by adding and removing $o(n^2)$ edges.
\end{theorem}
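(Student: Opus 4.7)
The plan is to reduce the $F$-free hypothesis to a $K_k$-free one via the graph removal lemma, and then to conclude by applying the standard Erd\H{o}s--Simonovits stability theorem after controlling the edge count.

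Since $\chi(F) = k$, the graph $F$ embeds into a blow-up $K_k[t]$ for some $t$. A standard supersaturation argument for cliques shows that $G$ contains $o(n^k)$ copies of $K_k$: otherwise $G$ would contain a $K_k[t]$ and hence a copy of $F$, contradicting the hypothesis. By the $K_k$ removal lemma, one may therefore delete $o(n^2)$ edges from $G$ to obtain a $K_k$-free graph $G'$. Each deleted edge lies in at most $\binom{n-2}{r-2} = O(n^{r-2})$ copies of $K_r$, so Theorem~\ref{ESS-gen} gives
\[
\N(K_r, G') \geq \N(K_r, G) - o(n^r) \geq \ex(n, K_r, F) - o(n^r) = \N(K_r, T_{k-1}(n)) - o(n^r).
\]

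Next I would show that $e(G') \geq e(T_{k-1}(n)) - o(n^2)$. Applying Zykov symmetrization to $G'$ produces a complete $(k-1)$-partite graph $\widetilde G = K_{n_1,\ldots,n_{k-1}}$ with $\N(K_r, \widetilde G) \geq \N(K_r, G')$. A second-order Taylor expansion of the elementary symmetric polynomial $e_r(n_1, \ldots, n_{k-1})$ at the balanced point $n_i = n/(k-1)$ shows that near-maximality of the $K_r$-count forces $\sum_i (n_i - n/(k-1))^2 = o(n^2)$, and hence $e(\widetilde G) \geq e(T_{k-1}(n)) - o(n^2)$. Once this edge bound is transferred back to $G'$, the standard stability theorem applied to the $K_k$-free graph $G'$ shows that $G'$ is within $o(n^2)$ edit distance of $T_{k-1}(n)$; since $G$ and $G'$ differ by only $o(n^2)$ edges, the same holds for $G$.

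The main obstacle is precisely this transfer from $\widetilde G$ back to $G'$: a Zykov step chosen to preserve (or increase) the $K_r$-count may decrease the edge count, so the inequality $e(G') \geq e(\widetilde G)$ is not automatic. One resolution is a more careful symmetrization that tracks both quantities simultaneously; another is to appeal to the structural stability version of Zykov's theorem (as in \cite{cnr}), which directly gives closeness of $G'$ to $T_{k-1}(n)$ without ever passing through an explicit edge bound.
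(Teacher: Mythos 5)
Your first step (removal lemma to pass from $F$-free to a $K_k$-free $G'$ at the cost of $o(n^2)$ edges and hence $o(n^r)$ copies of $K_r$) is exactly the paper's first step. But the heart of the proof is the next step --- showing that $G'$ \emph{itself} has at least $\left(1-\frac{1}{k-1}\right)\frac{n^2}{2} - o(n^2)$ edges so that Theorem~\ref{edge-stability} applies --- and here your argument has a genuine gap, which you correctly identify yourself. Zykov symmetrization produces a complete multipartite graph $\widetilde G$ with at least as many copies of $K_r$, and your Taylor-expansion argument does show $\widetilde G$ is nearly balanced; but each symmetrization step replaces one vertex's neighborhood by another's chosen only to not decrease the $K_r$-count, and such a step can strictly decrease the edge count. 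So no inequality between $e(G')$ and $e(\widetilde G)$ is available in the direction you need, and the conclusion about $\widetilde G$ says nothing about the edit distance of $G'$ from $T_{k-1}(n)$. The fallback you mention --- a ``structural stability version of Zykov's theorem'' --- is precisely the missing ingredient, not something you can cite as already established (the result of Cutler--Nir--Radcliffe referenced in the paper is a supersaturation statement, not a stability statement).

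The paper fills this gap with two lemmas that work directly on $G'$. First, a vertex-deletion argument in the style of Norin (Lemma~\ref{norine-lemma}): repeatedly delete the vertex in the fewest copies of $K_r$; an induction shows that either one reaches a large induced subgraph $G''$ in which \emph{every} vertex lies in at least $(1-\alpha)\,\pi(K_r,K_k)\frac{|V(G'')|^{r-1}}{(r-1)!}$ copies of $K_r$, or one reaches a graph beating the $K_k$-free maximum, which is impossible. Second, a degree lemma (Lemma~\ref{degree-lemma}): if a vertex $x$ of a $K_k$-free graph lies in many copies of $K_r$, then since $N(x)$ is $K_{k-1}$-free, Theorem~\ref{zykov-theorem} applied to $N(x)$ bounds $\N(K_{r-1},N(x))$ in terms of $d(x)$ and forces $d(x) \geq \left(1-\frac{1}{k-1}\right)n - o(n)$. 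Together these give $e(G'') \geq \left(1-\frac{1}{k-1}\right)\frac{n^2}{2} - o(n^2)$ with $e(G)-e(G'')=o(n^2)$, and the standard stability theorem finishes. If you want to complete your write-up, you should either reproduce an argument of this type or carry out the ``more careful symmetrization that tracks both quantities simultaneously'' that you allude to; as it stands the proposal does not close.
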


\section{Supersaturation}\label{super-section}
We begin with a modification of a result of Katona, Nemetz and Simonovits \cite{KNS}.

\begin{lemma}\label{mono-dec}
Let $H$ and $F$ be graphs. Then
 \[
\frac{\ex(n, H , F)}{\binom{n}{|V(H)|}}.
\]
is monotone decreasing as $n$ increases.
\end{lemma}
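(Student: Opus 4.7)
The plan is to mirror the classical Katona--Nemetz--Simonovits averaging argument (originally written for $\ex(n,F)/\binom{n}{2}$), replacing ``edge'' by ``copy of $H$'' throughout. Write $h=|V(H)|$; the target inequality
\[
\frac{\ex(n,H,F)}{\binom{n}{h}} \leq \frac{\ex(n-1,H,F)}{\binom{n-1}{h}}
\]
is equivalent, after using $\binom{n}{h}/\binom{n-1}{h} = n/(n-h)$, to
\[
(n-h)\,\ex(n,H,F) \leq n\,\ex(n-1,H,F),
\]
so it suffices to establish the latter for each $n > h$.

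To prove this, I would take an extremal graph, namely an $n$-vertex $F$-free graph $G$ with $\N(H,G) = \ex(n,H,F)$, and double-count pairs $(H',v)$, where $H'$ is a copy of $H$ in $G$ and $v \in V(G) \setminus V(H')$. Summing by copies of $H$ first: each copy $H'$ uses $h$ vertices, so is paired with exactly $n-h$ vertices, giving a total of $(n-h)\,\ex(n,H,F)$. Summing by vertices first: for each $v \in V(G)$ the induced subgraph $G-v$ is still $F$-free on $n-1$ vertices, and it contains exactly the copies of $H$ in $G$ that avoid $v$, so the contribution from $v$ is $\N(H,G-v) \leq \ex(n-1,H,F)$. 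This yields the required inequality.

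I do not expect a real obstacle: the argument is a direct lift of the $h=2$ case, and the only thing to be careful about is the combinatorial identity relating $\binom{n}{h}$ to $\binom{n-1}{h}$ and the range of $n$ (the claim is vacuous for $n \le h$, where both sides are zero or undefined). If the authors want the statement to hold for all $n \geq h+1$ uniformly, it suffices to note that the proof uses no hypothesis beyond the existence of an extremal $F$-free host graph, which is immediate.
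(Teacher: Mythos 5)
Your proposal is correct and is essentially identical to the paper's proof: both take an extremal $n$-vertex $F$-free graph, double-count pairs consisting of a copy of $H$ and a vertex outside it, bound the per-vertex contribution by $\ex(n-1,H,F)$, and finish with the identity $\binom{n}{h}/\binom{n-1}{h}=n/(n-h)$. No differences worth noting.
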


\begin{proof}
Suppose $G$ is an $n$-vertex $F$-free graph with the maximum number of copies of $H$. We double-count the pair $(H,v)$ where $H$ is a copy of the graph $H$ in $G$ and $v$ is a vertex not incident to $H$.
We can fix $H$ in $\ex(n,H,F)$ ways and then choose $v$ in $n-|V(H)|$ ways. On the other hand, there are $n$ ways to fix $v$ and on the remaining $n-1$ vertices there are at most
$\ex(n-1,H,F)$ copies of $H$. Thus,
\[
(n-|V(H)|)\cdot \ex(n,H,F) \leq n\cdot \ex(n-1,H,F).
\]
Solving for $\ex(n,H,F)$ and dividing both sides by $\binom{n}{|V(H)|}$ gives
\[
\frac{\ex(n,H,F)}{\binom{n}{|V(H)|}} \leq \frac{n}{n-|V(H)|} \frac{\ex(n-1,H,F)}{\binom{n}{|V(H)|}} = \frac{\ex(n-1,H,F)}{\binom{n-1}{|V(H)|}}.
\]
\end{proof}

Observe that if $F$ and $H$ satisfy $\chi(H) < \chi(F)$, then
\[
\frac{\ex(n, H , F)}{\binom{n}{|V(H)|}}
\]
is monotone decreasing by Lemma~\ref{mono-dec} and bounded below by the number of copies of $H$ in 
the Tur\'an graph $T_{\chi(F)-1}(n)$. This implies that
\[
\pi(H,F) =\lim_{n \rightarrow \infty} \frac{\ex(n, H , F)}{\binom{n}{|V(H)|}}
\]
exists. 

We are now ready to prove supersaturation in the generalized setting. The argument is essentially the same as an averaging argument used to prove supersaturation for hypergraphs (see \cite{super, Er4}).

\begin{proof}[Proof of Theorem~\ref{supersaturation}.]
Fix graphs $F, H$ on $f$ and $h$ vertices, respectively such that
$\chi(H) < \chi(F)$. 
Let 
\[
q=\pi(H,F)=\lim_{n \rightarrow \infty} \frac{\ex(n, H , F)}{\binom{n}{h}}.
\]
Fix $c > 0$ and suppose $G$ is an $n$-vertex graph with 
\[
\N(H,G) > \ex(n,H,F) + c n^h \geq (q +c) \binom{n}{h}.
\]
Choose $m$ such that 
\[
\ex(m,H,F) \leq \left(q + \frac{c}{2}\right)\binom{m}{h}.
\]
Assume (for the sake of a contradiction) that there are less than $\frac{c}{2 \cdot h!} \binom{n}{m}$ sets of $m$ vertices spanning more than $\left(q+\frac{c}{2}\right)\binom{m}{h}$
copies of $H$. Note that among $m$ vertices there are at most $\binom{m}{h}h!$ distinct copies of $H$. Therefore,
\begin{align*}
\sum_{S \in \binom{V(G)}{m}} \N(H,S) & <  \frac{c}{2 \cdot h!}\binom{n}{m}\binom{m}{h}h! + \binom{n}{m}\left(q + \frac{c}{2}\right)\binom{m}{h} \\
& =  \left( q + c \right)\binom{n}{m}\binom{m}{h}.
\end{align*}
On the other hand,
each copy of $H$ in $G$ is contained in $\binom{n - h}{m - h}$ vertex sets of size $m$, so
\[
\sum_{S \in \binom{V(G)}{m}} \N(H,S) = \binom{n - h}{m - h}\N(H,G) \geq  \binom{n - h}{m - h} \left(q + c\right) \binom{n}{h}=  \left(q + c\right) \binom{n}{m} \binom{m}{h}.
\]
Combining these two estimates for $\sum \N(H,S)$ gives a contradiction.
Therefore, there are at least $\frac{c}{2 \cdot h!} \binom{n}{m}$ sets of $m$ vertices spanning more than $\left(q+\frac{c}{2}\right)\binom{m}{h} \geq \ex(m,H,F)$
copies of $H$. Each of these $m$-sets contains a copy of $F$ and each copy of $F$ in $G$ is counted at most $\binom{n-f}{m - f}$ times in this way.
Therefore, the number of copies of $F$ in $G$ is
\[
\N(F,G) \geq \frac{c}{2 \cdot h!} \binom{n}{m}{\binom{n-f}{m - f}}^{-1} \geq c_F n^f
\]
for $c_F$ small enough.
\end{proof}

\begin{proof}[Proof of Theorem~\ref{ESS-gen}.]
 Let $H$ and $F$ be graphs on $h$ and $f$ vertices, respectively.
    Fix any $c>0$ and
    suppose $G$ is an $n$-vertex $F$-free graph with
    \[
    \N(G,H) > \ex(n,H,F)+ c n^h.
    \]
    Then Theorem~\ref{supersaturation} implies that
    $G$ contains at least $c_F n^f$ copies of $F$. Therefore, Proposition~\ref{degen-prop} implies that $G$ contains blow up $F[t]$ of $F$. 
    
    This implies that 
    \[
    \ex(n,H,F[t]) = \ex(n,H,F) +o(n^h).
    \]
    Now, as $F$ is contained in a blow-up $K_k[t]$ of $K_k$ for some $t$ large enough, we have
    \[
    \ex(n,H,F) \leq \ex(n,H,K_k[t]) \leq \ex(n,H,K_k) + o(n^h).
    \]
    
\end{proof}

\section{Stability}\label{stability-section}
We begin with two lemmas. The first will demonstrate that a $K_k$-free graph with nearly the extremal number of copies of $H$ contains a large subgraph in which every vertex is contained in many copies of $H$. The lemma is an argument of Norin \cite{norin} but adjusted to the subgraph counting context.

\begin{lemma}\label{norine-lemma}
Fix positive integers $k>r$. For $\alpha>0$, there exists $\beta > 0$ and $n_0 > 0$ such that every $K_k$-free graph $G$ with $|V(G)| \geq n_0$ and
\[
\N(H,G) > (1-\beta)\pi(H,K_k)\frac{|V(G)|^r}{r!}
\]
contains either

\noindent \textbf{1.} a subgraph $G'$ with $|V(G')| > (1- \alpha)|V(G)|$ such that every vertex of $G'$ is contained in more than
\[
(1-\alpha) \pi(H,K_k) \frac{|V(G')|^{r-1}}{(r-1)!}
\]
copies of $H$, or

 \noindent \textbf{2.} a subgraph $G'$ with $|V(G')| = \lfloor(1- \alpha)|V(G)|\rfloor$ and $\N(H,G') > \pi(H,K_k)\frac{|V(G')|^r}{r!}$. 
\end{lemma}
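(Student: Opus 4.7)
The plan is to analyze a greedy vertex-deletion process of the kind used by Norin. Write $p = \pi(H, K_k)$ and $n = |V(G)|$, and set $G_0 = G$. For each $i \geq 1$, so long as $G_{i-1}$ contains a vertex $v_i$ lying in at most $(1-\alpha) p \frac{|V(G_{i-1})|^{r-1}}{(r-1)!}$ copies of $H$, put $G_i = G_{i-1} - v_i$. I would run this process for at most $\lceil \alpha n \rceil$ steps and split into the two cases of the lemma.

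If the process halts on its own after some $i^* < \lceil \alpha n \rceil$ steps, take $G' = G_{i^*}$: then $|V(G')| \geq n - \lceil \alpha n \rceil + 1 > (1-\alpha)n$, and the failure of the deletion rule forces every vertex of $G'$ to lie in more than $(1-\alpha) p \frac{|V(G')|^{r-1}}{(r-1)!}$ copies of $H$, giving the first alternative.

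Otherwise the process runs for the full $\lceil \alpha n \rceil$ steps; take $G' = G_{\lceil \alpha n \rceil}$, so $|V(G')| = \lfloor (1-\alpha) n \rfloor$, and estimate the total number of $H$-copies destroyed. Each deletion of $v_i$ costs at most $(1-\alpha) p \frac{(n-i+1)^{r-1}}{(r-1)!}$ copies, and comparing the resulting sum to $\int_{(1-\alpha)n}^{n} x^{r-1}\,dx$ bounds the total loss by $(1-\alpha)\bigl(1-(1-\alpha)^r\bigr)\frac{p n^r}{r!} + O(n^{r-1})$. Combined with the hypothesis $\N(H,G) > (1-\beta)p\frac{n^r}{r!}$ this yields
\[
\N(H,G') > \bigl(\alpha - \beta + (1-\alpha)^{r+1}\bigr)\frac{p n^r}{r!} - O(n^{r-1}),
\]
and since $|V(G')|^r \leq (1-\alpha)^r n^r$, the right-hand side exceeds $p \frac{|V(G')|^r}{r!}$ as soon as $\beta < \alpha\bigl(1 - (1-\alpha)^r\bigr)$. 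Since $1 - (1-\alpha)^r > 0$ for $\alpha > 0$, such a $\beta$ exists, and $n_0$ is then chosen large enough to absorb the $O(n^{r-1})$ error.

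The main obstacle I expect is this copies-lost estimate. The naive bound that replaces each $(n-i+1)^{r-1}$ by $n^{r-1}$ yields a total loss of order $\alpha(1-\alpha) r \cdot \frac{p n^r}{r!}$, which is not small enough to beat $(1-\alpha)^r \frac{p n^r}{r!}$ once $r \geq 4$; it is essential to exploit the shrinkage of $|V(G_{i-1})|$ during the process via the Riemann-sum comparison with $\int x^{r-1}\,dx$, and to verify by a short algebraic rearrangement that the resulting threshold $\alpha(1-(1-\alpha)^r)$ is genuinely positive for every choice of $\alpha>0$ and $r\geq 1$.
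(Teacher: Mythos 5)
Your proof is correct and takes essentially the same route as the paper: the identical greedy deletion of minimum-$H$-count vertices with the identical two-case split, the only difference being bookkeeping --- the paper amortizes the initial deficit $\beta$ over the $\lceil \alpha n\rceil$ deletions via an induction carrying the decaying factor $\bigl(1-\tfrac{m-\ell}{m}\beta\bigr)$, while you telescope the losses directly and compare with $\int_{(1-\alpha)n}^{n}x^{r-1}\,dx$. Both computations land on a threshold for $\beta$ of order $r\alpha^{2}$ (your $\alpha\bigl(1-(1-\alpha)^{r}\bigr)$ versus the paper's $r\alpha^{2}/2$), and your remark that the naive per-step bound $n^{r-1}$ fails for $r\geq 4$ correctly identifies why the sharper estimate is needed.
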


\begin{proof}
Choose $\delta$ so that $(1 - \delta)^2 > 1 - \frac{\alpha}{2}$ and $\delta < \frac{r \alpha^2}{2}$. Choose $n_0$ so that $n^r \geq (n -1 )^r + (1 - \delta)r n^{r-1}$ for all $n \geq (1 - \alpha)n_0$. If every vertex of $G$ belongs to more than $(1-\alpha) \pi(H,K_k) \frac{|V(G)|^{r-1}}{(r-1)!}$ copies of $H$, then we are done. If not, delete a vertex contained in the minimum number of copies of $H$ to obtain a subgraph $G_1$ of $G$. Repeat this procedure to obtain subgraphs $G_2, G_3, $ etc. If we reach a graph $G'$ that satisfies the lemma, then we are done. Therefore, suppose we have reached a graph $G_m$ such that $m = \lceil \alpha n\rceil$. We shall prove by induction on $\ell$ that
\begin{equation}\label{induct}
\N(H,G_\ell) > \Big( 1 - \frac{m - \ell}{m}\beta \Big)\pi(H,K_k)\frac{|V(G_\ell)|^r}{r!}.
\end{equation}
for $\ell \leq m$. 

The base case $\ell = 0$ follows from the hypotheses of the lemma.  So put $0 < \ell \leq m$ and assume (\ref{induct}) holds for $\ell-1$.  
For ease of notation put $n' = |V(G_{\ell-1})| = |V(G)|-\ell+1$.
Now (applying the induction hypothesis for $G_{\ell-1}$, and the choices of $\delta$ and $n_0$) we have
\begin{flalign*}
\frac{\N(H,G_\ell)}{\pi(H,K_k)} &\geq \frac{\N(H,G_{\ell-1})}{\pi(H,K_k)} - (1 -\alpha)\frac{(n')^{r-1}}{(r-1)!} \\
&\geq \Big(1 - \frac{m - \ell + 1}{m}\beta \Big)\frac{(n')^r}{r!} - (1 - \alpha)\frac{(n')^{r-1}}{(r-1)!} \\
&\geq \Big(1 - \frac{m - \ell + 1}{m} \beta \Big)\Big( \frac{(n' - 1)^r}{r!} + (1- \beta)\frac{(n')^{r-1}}{(r-1)!} \Big)- (1 - \alpha)\frac{(n')^{r-1}}{(r-1)!} \\
&\geq \Big( 1 - \frac{m - \ell}{m}\beta \Big)\frac{(n' - 1)^r}{r!} - \frac{\beta}{\alpha n}\frac{(n' - 1)^r}{r!} + \frac{\alpha}{2}\frac{(n')^{r-1}}{(r-1)!} \\
&\geq \Big( 1 - \frac{m - \ell}{m}\beta \Big) \frac{(n' - 1)^r}{r!} - \Big(\frac{\alpha}{2} - \frac{\beta}{\alpha r}\Big)\frac{(n')^{r-1}}{(r-1)!} \\
&> \Big( 1 - \frac{m - \ell}{m}\beta \Big) \frac{(n' - 1)^r}{r!}.
\end{flalign*}
Multiplying through by $\pi(H,K_k)$ proves (\ref{induct}).
When $m=\ell$ the inequality (\ref{induct}) gives
\[
\N(H,G_m) > \pi(H,K_k)\frac{|V(G_m)|^r}{r!}
\]
which completes the proof of the lemma.
\end{proof}

Our second lemma gives a lower bound on vertex degrees in $K_k$-free graphs with many copies of $K_r$.

\begin{lemma}\label{degree-lemma}
Let $G$ be an $n$-vertex, $K_k$-free graph and $x \in V(G)$. If 
\begin{equation}\label{kr-deg}
  \N(K_{r-1}, N(x)) \geq (1- \alpha) r \binom{k-1}{r} \Big( \frac{1}{k-1}\Big)^r n^{r-1},  
\end{equation}
then
\[
d(x) \geq (1-\alpha)^{1/(r-1)} \frac{k-2}{k-1}n - (k-3).
\]
\end{lemma}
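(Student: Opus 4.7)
The plan is to use Zykov's theorem (Theorem~\ref{zykov-theorem}) applied to the neighborhood of $x$. First I would observe that since $G$ is $K_k$-free, the neighborhood $N(x)$ induces a $K_{k-1}$-free subgraph on $d(x)$ vertices (any copy of $K_{k-1}$ inside $N(x)$ would, together with $x$, produce a forbidden $K_k$). By Zykov's theorem applied to the induced subgraph on $N(x)$, we obtain
\[
\N(K_{r-1}, N(x)) \;\leq\; \binom{k-2}{r-1}\left\lceil \frac{d(x)}{k-2}\right\rceil^{r-1}.
\]

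Next I would combine this upper bound with the hypothesis (\ref{kr-deg}). The lower bound in (\ref{kr-deg}) is written in terms of $\binom{k-1}{r}$, but the clean identity $r\binom{k-1}{r}=(k-1)\binom{k-2}{r-1}$ allows the right-hand side of (\ref{kr-deg}) to be rewritten as
\[
(1-\alpha)\binom{k-2}{r-1}\left(\frac{n}{k-1}\right)^{r-1}.
\]
Chaining these two estimates and dividing by $\binom{k-2}{r-1}$, I would then take $(r-1)$-th roots to get
\[
(1-\alpha)^{1/(r-1)}\frac{n}{k-1} \;\leq\; \left\lceil \frac{d(x)}{k-2}\right\rceil.
\]

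Finally, I would eliminate the ceiling using $\lceil d(x)/(k-2)\rceil \leq (d(x)+k-3)/(k-2)$ and solve for $d(x)$; this yields exactly
\[
d(x) \;\geq\; (1-\alpha)^{1/(r-1)}\frac{k-2}{k-1}n-(k-3),
\]
as required. There is no real obstacle in this argument: the only subtle step is recognizing the identity $r\binom{k-1}{r}=(k-1)\binom{k-2}{r-1}$ so that Zykov's bound aligns numerically with the hypothesis, and keeping careful track of the constant loss from replacing the ceiling by its natural upper bound $(d(x)+k-3)/(k-2)$ (rather than $(d(x)+k-2)/(k-2)$) to recover the stated $-(k-3)$ error term.
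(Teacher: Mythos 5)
Your proposal is correct and follows essentially the same route as the paper: both pass to the $K_{k-1}$-free neighborhood $N(x)$, apply Zykov's theorem to bound $\N(K_{r-1},N(x))$ by $\binom{k-2}{r-1}\lceil d(x)/(k-2)\rceil^{r-1} \leq \binom{k-2}{r-1}\bigl((d(x)+k-3)/(k-2)\bigr)^{r-1}$, and then solve for $d(x)$. The only difference is that you spell out the algebra (the identity $r\binom{k-1}{r}=(k-1)\binom{k-2}{r-1}$ and the root-taking) that the paper compresses into ``solving for $d(x)$.''
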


\begin{proof}
The neighborhood $N(x)$ is $K_{k-1}$-free as $G$ is $K_k$-free. Therefore, by Theorem~\ref{zykov-theorem} we have
\begin{align*}
\N(K_{r-1}, N(x)) \leq \ex(|N(x)|, K_{r-1}, K_{k-1}) & \leq \binom{k-2}{r-1} \Big\lceil \frac{d(x)}{k-2}  \Big\rceil ^{r-1} \\
& \leq \binom{k-2}{r-1} \Big( \frac{d(x) + (k-3)}{k-2} \Big)^{r-1}.
\end{align*}
Combining the above estimate for $\N(K_{r-1}, N(x))$ with (\ref{kr-deg}) and solving for $d(x)$ completes the proof.
\end{proof}

Lemma~\ref{degree-lemma} implies that if each vertex of $G$
is contained in at least $\ex(n,K_r,K_k) \frac{r}{n} - o(n^{r-1})$ copies of $K_r$, then $e(G) \geq \left(1-\frac{1}{k-1}\right)\frac{n^2}{2} - o(n^2)$.

We will need a standard stability result for edges (see \cite{sim-stab}).

\begin{theorem}[Stability theorem]\label{edge-stability}
    Let $G$ be an $n$-vertex $F$-free graph with
    \[ 
    \left(1- \frac{1}{\chi(F)-1} \right) \frac{n^2}{2} - o(n^2).
    \]
    Then $G$ can be obtained from $T_{\chi(F)-1}(n)$ by adding and removing $o(n^2)$ edges.
\end{theorem}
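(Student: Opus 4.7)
The plan is to first reduce to the clique case $F = K_k$ using a supersaturation argument together with the graph removal lemma, and then establish stability for $K_k$-free graphs by induction on $k$.

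For the reduction, suppose $G$ is $F$-free with $e(G) \geq \left(1 - \frac{1}{k-1}\right)\frac{n^2}{2} - o(n^2)$. If $G$ contained at least $\delta n^k$ copies of $K_k$ for some fixed $\delta > 0$, then a standard counting argument on balanced blow-ups (in the spirit of Erd\H os--Stone--Simonovits) would produce $K_k[t]$ in $G$ with $t = |V(F)|$; since $\chi(F) = k$ gives $F \subseteq K_k[t]$, this contradicts $F$-freeness. Hence $\N(K_k, G) = o(n^k)$, and by the graph removal lemma of Erd\H os--Frankl--R\"odl we may delete $o(n^2)$ edges to obtain a $K_k$-free graph $G'$ still satisfying $e(G') \geq \left(1 - \frac{1}{k-1}\right)\frac{n^2}{2} - o(n^2)$. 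If the theorem is proven for $F = K_k$, the conclusion transfers back to $G$ since only $o(n^2)$ edges were altered.

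For the clique case I would proceed by induction on $k$, with $k = 2$ trivial. Given $G$ a $K_k$-free graph with edge count within $o(n^2)$ of the Tur\'an bound, a Kruskal--Katona-type inequality (or a direct convexity argument on the degree sequence) forces
\[
\N(K_{k-1}, G) \geq (1 - o(1))\,\N(K_{k-1}, T_{k-1}(n)).
\]
By averaging, most vertices $v$ lie in $(1 - o(1))\left(\frac{n}{k-1}\right)^{k-2}$ copies of $K_{k-1}$, so $\N(K_{k-2}, N(v))$ meets the hypothesis of Lemma~\ref{degree-lemma} with $r = k-1$, giving $d(v) \geq (1 - o(1))\frac{k-2}{k-1}n$. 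Fix such a $v$. The induced subgraph on $N(v)$ is $K_{k-1}$-free with near-extremal edge density, so by the inductive hypothesis $N(v)$ is $o(n^2)$-close to $T_{k-2}(|N(v)|)$. Taking the $k-2$ parts of this partition together with $V(G) \setminus N(v)$ (of size close to $\frac{n}{k-1}$) as the final part yields a candidate $(k-1)$-partition of $V(G)$. A greedy re-assignment moving each vertex to the part to which it sends the fewest edges then converts this into a partition differing from $T_{k-1}(n)$ in only $o(n^2)$ edges.

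The main obstacle is to quantitatively verify this induction step: one must ensure that the $T_{k-2}$ partition of $N(v)$ extends coherently to $V(G)$, controlling both the edges internal to the new ``$(k-1)$-st part'' $V(G) \setminus N(v)$ and the non-edges across parts. Both are handled by the re-assignment step, but bounding the resulting symmetric difference by $o(n^2)$ requires careful aggregation of error terms, since the non-neighborhood of $v$ is not forced to be independent by $K_k$-freeness alone. As an alternative, one could apply the Szemer\'edi regularity lemma at the start to pass to a bounded-order reduced graph, where the induction becomes essentially discrete and all error terms are absorbed by the regularity parameters; this is the path taken in most modern expositions.
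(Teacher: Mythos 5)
The paper does not actually prove this statement: it is quoted as the classical stability theorem of Simonovits and cited to \cite{sim-stab}, then used as a black box in the proof of Theorem~\ref{stability}. So there is no in-paper proof to compare against; the expected ``solution'' here is a citation, and your attempt must be judged as a free-standing proof. Parts of it are sound: the reduction to $F=K_k$ (supersaturation to exclude $\delta n^k$ copies of $K_k$, then the removal lemma of \cite{EFR}) is correct, and is in fact the same device the paper uses at the start of its proof of Theorem~\ref{stability}; likewise, iterating the Moon--Moser inequalities from the edge count does give $\N(K_{k-1},G)\geq (1-o(1))\left(\frac{n}{k-1}\right)^{k-1}$, so that step can be made rigorous.

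The genuine gap is in the induction step. Your inductive hypothesis is \emph{edge} stability for $K_{k-1}$-free graphs, so to apply it to $N(v)$ you must know $e(N(v))\geq\left(1-\frac{1}{k-2}\right)\frac{d(v)^2}{2}-o(n^2)$; but what you have established about $N(v)$ is only a \emph{clique-count} bound, $\N(K_{k-2},N(v))\geq(1-o(1))\left(\frac{n}{k-1}\right)^{k-2}$. Kruskal--Katona does not bridge this: for $k=5$, a $K_4$-free graph on $m$ vertices with $(m/3)^3$ triangles is only forced by Kruskal--Katona to have roughly $0.18\,m^2$ edges, far short of the required $\frac{m^2}{3}-o(m^2)$. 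The implication you actually need --- a $K_{k-1}$-free graph with near-maximal $K_{k-2}$-count is close to $T_{k-2}$, hence has near-extremal edge count --- is precisely the Ma--Qiu theorem \cite{mq}, i.e.\ the paper's Theorem~\ref{stability} with $F=K_{k-1}$ and $r=k-2$, and the paper proves \emph{that} by invoking edge stability. So, as written, your induction step silently assumes a statement essentially equivalent to the theorem being proven. It can be patched, but only by importing real machinery into the induction: one must first upgrade the clique-count hypothesis on $N(v)$ to a min-degree (hence edge-count) bound, which is exactly what the paper's Lemma~\ref{norine-lemma} (Norin's vertex-deletion argument) combined with Lemma~\ref{degree-lemma} accomplishes. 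Two further weaknesses: the claim that ``by averaging, most vertices lie in $(1-o(1))\left(\frac{n}{k-1}\right)^{k-2}$ copies of $K_{k-1}$'' is false as stated --- comparing the average against the pointwise maximum $\left(\frac{n}{k-2}\right)^{k-2}$ yields only a positive proportion of such vertices (a single such vertex does exist, and that is all your outline needs) --- and the concluding greedy re-assignment step, which you yourself flag as the main obstacle, is not an argument but a deferral, with the regularity-lemma alternative likewise left entirely unexecuted.
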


\begin{proof}[Proof of Theorem~\ref{stability}.]
Fix integers $r<k$ and let $F$ be a graph with $\chi(F)= k$.
Let $G$ be an $n$-vertex $F$-free graph with
\[
\N(K_r,G) > \ex(n,K_r,F) - o(n^r).
\]
As $G$ is $F$-free with $\chi(F)=k$, a removal lemma due to Erd\H os Frankl and R\"odl \cite{EFR} asserts that $G$ can be made $K_k$-free with the removal of 
$o(n^2)$ edges\footnote{Note that when $F=K_k$ we may skip the use of the removal lemma.}. Removing $o(n^2)$ edges destroys at most $o(n^2) \cdot n^{r-1} = o(n^r)$ copies of $K_r$. Let $G'$ be the resulting $K_k$-free subgraph of $G$.
Now
\[
\N(K_r,G') > \ex(n,K_r,F) - o(n^r) \geq \ex(n,K_r,K_k) - o(n^r). 
\]
Let us apply Lemma~\ref{norine-lemma} to $G'$. Observe that the second outcome of Lemma~\ref{norine-lemma} is impossible here as it would imply that $G'$ contains a subgraph $K_k$. Therefore, the first outcome gives that $G'$ contains a subgraph $G''$
on $n'' \geq (1-\alpha)n$ vertices such that
each vertex of $G''$ is contained in $\ex(n,K_r,K_k)\frac{r}{n} - o(n^{r-1})$ copies of $K_r$. Applying Lemma~\ref{degree-lemma} to $G''$ gives that every degree in 
$G''$ is at least $\left(1-\frac{1}{k-1}\right)n - o(n)$ and therefore
\[
e(G'') \geq \left(1-\frac{1}{k-1}\right) \frac{n^2}{2} - o(n^2). 
\]
As $e(G) - e(G'') = o(n^2)$ we may apply the Stability theorem (Theorem~\ref{edge-stability}) to complete the proof.
\end{proof}

\end{document}